\title{A Smooth Analytical Approximation of the Prime Characteristic Function}
\author{Stanislav Semenov \\
\href{mailto:stas.semenov@gmail.com}{stas.semenov@gmail.com} \\
\href{https://orcid.org/0000-0002-5891-8119}{ORCID: 0000-0002-5891-8119}}
\date{April 19, 2025}
\theoremstyle{definition}
\newtheorem{definition}{Definition}[section]
\theoremstyle{plain}
\newtheorem{theorem}[definition]{Theorem}
\newtheorem{lemma}[definition]{Lemma}
\theoremstyle{remark}
\begin{document}

\maketitle

\begin{abstract}
We construct a smooth real-valued function \( P(n) \in [0,1] \), defined via a triple integral with a periodic kernel, that approximates the characteristic function of prime numbers. The function is built to suppress when \( n \) is divisible by some \( m < n \), and to remain close to 1 otherwise. We prove that \( P(n) \to 1 \) for prime \( n \) and \( P(n) < 1 \) for composite \( n \), under appropriate limits of the smoothing parameters. The construction is fully differentiable and admits both asymptotic and finite approximations, offering a continuous surrogate for primality that is compatible with analytical, numerical, and optimization methods. We compare our approach with classical number-theoretic techniques, explore its computational aspects, and suggest potential applications in spectral analysis, machine learning, and probabilistic models of primes.
\end{abstract}

\subsection*{Mathematics Subject Classification}
03F60 (Constructive and recursive analysis), 26E40 (Constructive analysis), 03F03 (Proof theory and constructive mathematics)

\subsection*{ACM Classification}
F.4.1 Mathematical Logic, F.1.1 Models of Computation

\section*{Introduction}

The distribution of prime numbers lies at the heart of number theory~\cite{hardy1979introduction, titchmarsh1986theory}. While the classical prime characteristic function \( \chi_{\mathrm{prime}}(n) \) is defined discretely by
\[
\chi_{\mathrm{prime}}(n) =
\begin{cases}
1, & \text{if } n \text{ is prime}, \\
0, & \text{otherwise},
\end{cases}
\]
it is fundamentally discontinuous and thus ill-suited for many applications in analysis (where differentiability and integrability are crucial), optimization (which often relies on gradient-based methods), and numerical computation (where sharp discontinuities can lead to instability).

This paper introduces a \emph{brand-new smooth analytical approximation} \( P(n) \in [0,1] \) of the prime characteristic function. Unlike traditional number-theoretic constructions relying on arithmetic identities, recurrence relations, or multiplicative functions, our approach builds a continuous real-valued function that captures divisibility properties via \emph{integral smoothing}—that is, averaging over a continuous range of parameters. The function \( P(n) \) is defined through a triple integral over a parametrized domain involving a smooth periodic kernel sensitive to integer ratios. This stands in contrast to existing discrete methods and addresses difficulties in applying analytical tools directly to \( \chi_{\mathrm{prime}}(n) \).

The construction is inspired by the observation that divisibility can be viewed as an alignment between integer intervals. For instance, a composite number \( n \) has intervals of length \( n \) that align with intervals of length \( d \) (a divisor), resulting in integer ratios. By lifting the problem to the space of real variables and applying smooth kernel functions, we obtain a differentiable function that approximates primality in a soft, tunable manner. This yields a \( C^\infty \) approximation of \( \chi_{\mathrm{prime}}(n) \) with useful analytical and numerical properties.

We present two versions of the approximation theorem: an asymptotic limit formulation and a finite constructive formulation. The former ensures convergence of \( P(n) \) to \( \chi_{\mathrm{prime}}(n) \) under vanishing smoothness parameters; the latter guarantees that for any finite range of integers, suitable parameters can achieve any desired level of approximation.

\section{Smooth Approximation of the Prime Characteristic Function}

\subsection{Motivation from Integer Divisibility}

The classical definition of primality is based on integer divisibility: an integer \( n \geq 2 \) is prime if and only if there does not exist an integer \( m \in \{2, \dots, n - 1\} \) such that \( m \mid n \), or equivalently, such that the ratio \( n/m \) is an integer.

This motivates the idea of detecting primality by analyzing whether the ratios \( n/m \) are close to integers. The core goal is to construct a smooth analytical function that penalizes integer values of such ratios — with the penalty increasing as the ratio approaches an integer — while remaining near 1 for non-integer values.

To formalize this idea, we replace the discrete notion of divisibility with a smooth suppression kernel:
\[
K(z) \approx
\begin{cases}
0, & z \in \mathbb{Z}, \\
1, & z \notin \mathbb{Z},
\end{cases}
\]
where the kernel \( K(z) \in [0,1] \) is constructed to transition smoothly but sharply between these two behaviors.

We then lift this structure to the space of real-valued inputs by deforming both \( n \) and potential divisors \( m \) via smooth bump functions over continuous parameters. That is, instead of working with fixed integers, we evaluate expressions of the form \( x(t)/y(u,v) \), where \( x(t) \approx n \) and \( y(u,v) \approx m \), and integrate the resulting kernel value over the parameter space. This "smoothing" process transforms the discrete test of divisibility into a differentiable function \( P(n) \in [0,1] \), which smoothly approximates the characteristic function of primes.

Such a construction is not only theoretically elegant, but also enables the use of tools from real analysis, numerical optimization, and continuous modeling in problems related to prime numbers. The resulting function \( P(n) \), together with the choice of smoothing and suppression kernels, is described in detail in the following subsections.

We now formalize this approach by introducing an explicit smooth analytical approximation of the prime characteristic function. Unlike classical discrete methods (e.g., Eratosthenes' sieve or the Möbius function), our method defines \( P(n) \in [0,1] \) as an integral expression over parametrized curves and smooth periodic kernels that reflect the divisibility structure of \( n \).

\subsection{Definition}

Let \( n \in \mathbb{N} \), \( n \geq 2 \). We define a smooth function \( P(n) \in [0,1] \), interpreted as a continuous measure of primality:

\begin{equation}
P(n) = \iiint_{[0,1]^3} \phi(t,u,v) \cdot K_\varepsilon\left( z(t,u,v) \right) \, dt \, du \, dv,
\end{equation}

where:
\begin{itemize}
  \item \( x(t) = n + \delta \cdot \psi(t) \), with \( \delta > 0 \),
  \item \( y(u,v) = 2 + (n - 2) \cdot u + \delta \cdot \psi(v) \), ensuring \( y(u,v) > 0 \),
  \item \( z(t,u,v) := \frac{x(t)}{y(u,v)} \),
  \item \( \psi(s) = \sin^2(\pi s) \) is a smooth bump function,
  \item \( K_\varepsilon(z) \) is a smooth kernel satisfying \( K_\varepsilon(k) \to 0 \) as \( \varepsilon \to 0 \), for \( k \in \mathbb{Z} \), and \( K_\varepsilon(z) \to 1 \) otherwise,
  \item \( \phi(t,u,v) \in C^\infty([0,1]^3) \), with \( \phi > 0 \) and \( \iiint \phi(t,u,v) \, dt \, du \, dv = 1 \).
\end{itemize}

The perturbation amplitude \( \delta \) may either be fixed or depend on \( n \). For instance, the choice \( \delta(n) = n^{-2} \) ensures that the additive perturbation vanishes uniformly as \( n \to \infty \), which is useful in asymptotic analysis.

\subsection*{Examples: \( P(4) \) and \( P(5) \)}

We illustrate the behavior of the function \( P(n) \) on small integers by analyzing the cases \( n = 4 \) (composite) and \( n = 5 \) (prime).

\paragraph{Case \( n = 4 \):}  
The number 4 is composite and divisible by 2. Let us choose:
\[
x(t) = 4 + \delta \psi(t), \quad y(u,v) = 2 + 2u + \delta \psi(v),
\quad z(t,u,v) = \frac{x(t)}{y(u,v)}.
\]
Set \( t = v = 0 \), and select \( u_0 = \frac{2 - 2}{4 - 2} = 0 \). Then:
\[
x(0) = 4, \quad y(0,0) = 2, \quad z(0,0,0) = \frac{4}{2} = 2 \in \mathbb{Z}.
\]
Thus, the kernel \( K_\varepsilon(z) \) becomes close to zero in a neighborhood of \( (0,0,0) \). Since \( \phi > 0 \) and continuous, the integral includes a non-negligible region with suppressed values. Therefore:
\[
P(4) < 1.
\]

\paragraph{Case \( n = 5 \):}  
The number 5 is prime and has no proper divisors. For:
\[
x(t) = 5 + \delta \psi(t), \quad y(u,v) = 2 + 3u + \delta \psi(v),
\quad z(t,u,v) = \frac{x(t)}{y(u,v)},
\]
we analyze the unperturbed ratio:
\[
z_0(u) := \frac{5}{2 + 3u}, \quad u \in [0,1].
\]
This function is strictly decreasing on \( [0,1] \), taking values from \( 5/5 = 1 \) (at \( u = 1 \)) to \( 5/2 = 2.5 \) (at \( u = 0 \)). The function \( z_0(u) \) avoids all integers in this range.

Moreover, since \( \psi(t), \psi(v) \in [0,1] \) and \( \delta \) is small, the full ratio \( z(t,u,v) \) stays within a small tube around \( z_0(u) \), and therefore uniformly avoids integers:
\[
\mathrm{dist}(z(t,u,v), \mathbb{Z}) \geq \Delta > 0.
\]
Hence, for sufficiently small \( \varepsilon \) and large \( p \), the kernel remains close to 1 across the domain, and:
\[
P(5) \approx 1.
\]

\subsection{Examples of Kernels}

We list several kernel options that vanish near integers and converge to 1 elsewhere:

\begin{itemize}
  \item \textbf{Sine kernel:}
  \[
  K_\varepsilon(z) = \left( \frac{\sin^2(\pi z)}{\sin^2(\pi z) + \varepsilon} \right)^p
  \]
  \item \textbf{Modified Gaussian kernel:}
  \[
  K_\varepsilon(z) = 1 - \exp\left( -\frac{\sin^2(\pi z)}{\varepsilon} \right)
  \]
  \item \textbf{Singular exponential kernel:}
  \[
  K_\varepsilon(z) = \exp\left( -\frac{C}{\varepsilon \sin^2(\pi z)} \right)
  \]
  \item \textbf{Inverse polynomial kernel:}
  \[
  K_\varepsilon(z) = \frac{1}{1 + \left( \frac{\sin^2(\pi z)}{\varepsilon} \right)^p }
  \]
\end{itemize}

Here, \( \varepsilon > 0 \) controls sharpness, and \( p \in \mathbb{N} \), \( C > 0 \) control suppression near integers.

The expression \( \sin^2(\pi z) \) vanishes exactly at integers \( z \in \mathbb{Z} \), making it an effective analytic tool for detecting divisibility. This motivates its central role in all proposed kernel functions.

\subsection{Theoretical Properties}

\begin{itemize}
  \item \( P(n) \to 1 \) as \( \delta \to 0 \), \( \varepsilon \to 0 \), \( p \to \infty \) if \( n \) is prime,
  \item \( P(n) < 1 \) under the same limits if \( n \) is composite,
  \item The function \( P(n) \) is \( C^\infty \) on \( \mathbb{R}_+ \) due to smooth integrand,
  \item \( P(n) \) acts as a smooth primality filter.
\end{itemize}

\subsection{Two Versions of the Theorem}

\begin{theorem}[Constructive Finite Approximation]
\label{thm:constructive}
For any integer \( N \geq 3 \) and any \( \eta > 0 \), there exist parameters \( \delta > 0 \), \( \varepsilon > 0 \), and \( p \in \mathbb{N} \) such that for all integers \( 2 \leq m < N \),
\[
|P(m) - \chi_{\mathrm{prime}}(m)| < \eta.
\]
\end{theorem}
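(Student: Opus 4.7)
The strategy is to exploit the finiteness of the index set $\{2,3,\dots,N-1\}$: it suffices to establish the pointwise bound $|P(m)-\chi_{\mathrm{prime}}(m)|<\eta$ for each such $m$ with some parameters $(\delta_m,\varepsilon_m,p_m)$, and then to extract a single common triple $(\delta,\varepsilon,p)$ that works simultaneously across all $m$ by a joint refinement of the worst-case choices.

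For a prime $m\geq 3$, I would analyze the unperturbed ratio $z_0(u) := m/(2+(m-2)u)$, which is smooth and strictly decreasing on $[0,1]$; in particular its integer preimage $S_m$ is a finite subset of $[0,1]$. Off a small $\rho$-neighborhood $U_\rho$ of $S_m$ one has $\operatorname{dist}(z_0(u),\mathbb{Z})\geq \Delta(\rho)>0$, and by uniform continuity of $(t,u,v)\mapsto z(t,u,v)$ the full perturbed ratio inherits this separation off a slightly enlarged neighborhood once $\delta$ is small enough. Choosing $\varepsilon$ so that $K_\varepsilon(z)\geq 1-\eta/2$ whenever $\operatorname{dist}(z,\mathbb{Z})\geq \Delta/2$, and $\rho$ so small that the $\phi$-measure of $U_\rho$ lies below $\eta/2$, then yields $P(m)>1-\eta$. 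The degenerate case $m=2$, where $y$ is essentially constant in $u$, has to be handled separately with a direct estimate.

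For composite $m$ the required inequality is the much sharper $P(m)<\eta$, and this is the main obstacle of the theorem. A direct treatment of the integer crossings of $z_0$ only supplies a suppression region of $(t,u,v)$-measure $O(\sqrt{\varepsilon})$ — essentially the width of the zone $\{\sin^2(\pi z)\leq \varepsilon\}$ pulled back through $z$ — and hence gives only $P(m)\lesssim 1-c_m\sqrt{\varepsilon}$. To drive $P(m)$ below $\eta$ one must couple $p$ and $\varepsilon$ delicately: take $p$ large to deepen the suppression near each integer crossing, while calibrating $\varepsilon$ so that the zone $\{K_\varepsilon<\eta\}$ occupies a macroscopic fraction of the integration domain around those crossings. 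Balancing the Jacobian $\partial z/\partial y = -x/y^2$, the exponent $p$, the kernel scale $\varepsilon$, and the weight $\phi$ is the technical crux, because the \emph{same} triple $(\delta,\varepsilon,p)$ must simultaneously force $P(m)$ below $\eta$ for every composite $m$ in range while leaving $P(m)$ above $1-\eta$ for every prime there, and those two demands pull the parameters in opposite directions. This simultaneous calibration is the step I expect to be by far the hardest; once the per-$m$ bounds are secured, a finite-intersection argument across the $N-2$ residual constraints produces the uniform triple and closes the proof.
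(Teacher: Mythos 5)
Your diagnosis of the composite case as the crux is correct, and it is a genuine gap in your proposal, not merely a hard-but-routine calibration. For the sine kernel, $K_\varepsilon(z) < \eta$ only where $\mathrm{dist}(z,\mathbb{Z}) \lesssim \sqrt{\varepsilon p / |\log\eta|}$, a tube of $(t,u,v)$-measure $O(\sqrt{\varepsilon p})$ around the surface $\{z \in \mathbb{Z}\}$; to push $P(m)$ below $\eta$ that tube must carry $\phi$-mass at least $1-\eta$, which forces $\varepsilon p$ to be macroscopic, while the prime estimate needs $\varepsilon p \ll \eta$ so that $K > 1-\eta$ holds off the crossing set. Those two demands cannot be met by a single triple $(\delta,\varepsilon,p)$ and a single $\phi$ once $\eta$ is small, and a final ``finite-intersection'' step cannot repair a per-$m$ bound that was never established. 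Be aware that the paper's own argument does not close this gap either: Step 5(c) yields only $P(m) \le 1 - \alpha(1-\eta/2)$ — a bound away from $1$, not below $\eta$ — and Step 6's passage to $P(m) < \eta$ actually requires $\eta > 2(1-\alpha)/(2-\alpha)$ (the condition ``$\eta < 2\alpha/(1+\alpha)$'' written there has both the wrong direction and the wrong value); since $\alpha$, the $\phi$-mass of the suppression neighborhood, shrinks to zero with $\varepsilon$, the bound fails precisely for small $\eta$. You have located a real defect that the paper glosses over.

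On the prime side your measure-theoretic approach is actually a correction of the paper, not a reproduction. The unperturbed ratio $z_0(u) = m/(2+(m-2)u)$ sweeps the interval $[1, m/2]$ as $u$ runs over $[0,1]$, so by the intermediate value theorem it does cross the integers $2, \dots, \lfloor m/2 \rfloor$ for every prime $m \ge 5$. The paper's Step 1 takes the minimum over integer divisors $d$ only, and then Step 2 illicitly extrapolates to the continuous denominator range, claiming $\mathrm{dist}(z,\mathbb{Z}) \ge 3\Delta/4$ on the whole cube; the $n = 5$ worked example repeats the mistake by asserting $z_0(u)$ ``avoids all integers'' on $[1, 2.5]$. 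Your fix — excising a $\rho$-neighborhood $U_\rho$ of the finite crossing set $S_m = z_0^{-1}(\mathbb{Z})$, controlling $\phi(U_\rho) < \eta/2$, and enforcing $K \ge 1 - \eta/2$ on the complement once $\delta, \varepsilon$ are small — is the correct treatment and the $m = 2,3$ edge cases you flag are handled by inspection. In short: your prime estimate is sound and sharper than the paper's; your composite estimate is genuinely unfinished, and the obstruction you identify is real and unresolved in the paper as well.
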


\begin{proof}
Let \( \mathcal{M} = \{2,3,\dots,N-1\} \), and split it into:
\[
\mathcal{M}_{\mathrm{prime}} = \{m \in \mathcal{M} : m \text{ is prime}\}, \quad
\mathcal{M}_{\mathrm{comp}} = \mathcal{M} \setminus \mathcal{M}_{\mathrm{prime}}.
\]

\textbf{Step 1: Uniform separation from integers for primes.}  
For each \( m \in \mathcal{M}_{\mathrm{prime}} \), and each \( d < m \), \( m/d \notin \mathbb{Z} \). Since all such quotients are rational and the set is finite, define
\[
\Delta_m = \min_{2 \leq d < m} \min_{k \in \mathbb{Z}} \left| \frac{m}{d} - k \right| > 0,
\quad
\Delta := \min_{m \in \mathcal{M}_{\mathrm{prime}}} \Delta_m > 0.
\]

\textbf{Step 2: Choice of \( \delta \).}  
There exists \( \delta_0 > 0 \) such that for all \( \delta < \delta_0 \) and all \( t,u,v \in [0,1] \), we have:
\[
\left| \frac{m + \delta \psi(t)}{2 + (m-2)u + \delta \psi(v)} - \frac{m}{2 + (m-2)u} \right| < \frac{\Delta}{4}.
\]
Thus \( z(t,u,v) \) remains at distance \( \geq \frac{3\Delta}{4} \) from all integers.

\textbf{Step 3: Kernel approximation away from integers.}  
There exists \( \varepsilon_0 > 0 \), \( p_0 \in \mathbb{N} \) such that for all \( \varepsilon < \varepsilon_0 \) and \( p > p_0 \),
\[
\left| K_\varepsilon(z) - 1 \right| < \frac{\eta}{2}
\quad \text{for all } z \text{ such that } \mathrm{dist}(z, \mathbb{Z}) \geq \frac{\Delta}{2}.
\]

\textbf{Step 4: Estimate for primes.}  
Under the above conditions, for all \( m \in \mathcal{M}_{\mathrm{prime}} \), we have
\[
|P(m) - 1| < \frac{\eta}{2} < \eta.
\]

\textbf{Step 5: Suppression for composite numbers.}  
Let \( m \in \mathcal{M}_{\mathrm{comp}} \), and let \( d \in [2, m-1] \) be any proper divisor. Define \( u_0 = \frac{d - 2}{m - 2} \). Then \( z(0,u_0,0) = m/d \in \mathbb{Z} \).

\textit{(a) Existence of kernel drop:} There exists \( \gamma_m > 0 \) such that \( |z - m/d| < \gamma_m \Rightarrow K_\varepsilon(z) < \frac{\eta}{2} \). Let
\[
\gamma := \min_{m \in \mathcal{M}_{\mathrm{comp}}} \gamma_m > 0.
\]

\textit{(b) Neighborhood of suppression:} By continuity of \( z(t,u,v) \), there exists a neighborhood \( V_m = B_r(0,u_0,0) \subset [0,1]^3 \) such that \( z(t,u,v) \in (m/d \pm \gamma) \Rightarrow K_\varepsilon(z) < \frac{\eta}{2} \). The radius \( r \) can be chosen uniformly for all \( m \), since the set is finite.

\textit{(c) Integral estimate:}  
Let
\[
\alpha := \min_{m \in \mathcal{M}_{\mathrm{comp}}} \iiint_{V_m} \phi(t,u,v) \, dt \, du \, dv > 0.
\]
Then:
\[
P(m) \leq \iiint_{V_m} \phi K_\varepsilon + \iiint_{[0,1]^3 \setminus V_m} \phi K_\varepsilon
\leq \frac{\eta}{2} \alpha + (1 - \alpha) = 1 - \alpha\left(1 - \frac{\eta}{2}\right).
\]

\textbf{Step 6: Final parameter choice.}  
Choose:
\begin{itemize}
  \item \( \delta < \delta_0 \),
  \item \( \varepsilon < \min(\varepsilon_0, \frac{\alpha \eta}{4}) \),
  \item \( p > \max(p_0, \frac{2}{\alpha \eta}) \).
\end{itemize}
Then for all \( m \in \mathcal{M}_{\mathrm{comp}} \),
\[
P(m) < 1 - \alpha + \frac{\alpha \eta}{2} < \eta,
\quad \text{if } \eta < \frac{2\alpha}{1 + \alpha}.
\]

\textbf{Conclusion.}  
All bounds are uniform over the finite set \( \mathcal{M} \). Therefore, the approximation \( |P(m) - \chi_{\mathrm{prime}}(m)| < \eta \) holds for all \( m < N \) as claimed.
\end{proof}

\begin{theorem}[Asymptotic Limit Theorem]
\label{thm:asymptotic}
Let \( P(n) \) be defined as above. Then:
\[
\lim_{\substack{\delta \to 0 \\ \varepsilon \to 0 \\ p \to \infty}} P(n) = 
\begin{cases}
1, & \text{if } n \text{ is prime}, \\
c_n < 1, & \text{if } n \text{ is composite},
\end{cases}
\]
where \( c_n \in (0,1) \) is a constant depending on the divisor structure of \( n \). The limits may be taken sequentially: first \( \delta \to 0 \), then \( \varepsilon \to 0 \), and finally \( p \to \infty \).
\end{theorem}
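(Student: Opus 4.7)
The plan is to reduce both cases of Theorem~\ref{thm:asymptotic} to bounds already produced in the proof of Theorem~\ref{thm:constructive}, and then to upgrade the one-sided statement ``$|P(n) - \chi_{\mathrm{prime}}(n)| < \eta$ for each fixed $\eta > 0$'' into an actual sequential limit along $\delta \to 0$, then $\varepsilon \to 0$, then $p \to \infty$. Fixing $n$ and applying Theorem~\ref{thm:constructive} with $N = n + 1$ along a sequence $\eta_k \downarrow 0$ already supplies triples $(\delta_k, \varepsilon_k, p_k)$ with the desired monotone behaviour along which $P(n) \to \chi_{\mathrm{prime}}(n)$; what remains is to show that the limit is independent of the chosen sequence and is compatible with the stated sequential ordering.

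For prime $n$, the argument should be clean. Step~1 of the previous proof yields a fixed divisibility gap $\Delta > 0$, and for every sufficiently small $\delta$ the ratio $z(t,u,v)$ stays at distance at least $3\Delta/4$ from $\mathbb{Z}$ uniformly over $[0,1]^3$. On the compact set $\{z : \mathrm{dist}(z,\mathbb{Z}) \ge 3\Delta/4\}$, each kernel listed in the previous subsection tends to $1$ uniformly as $\varepsilon \to 0$ and $p \to \infty$, so I would invoke dominated convergence to conclude $\lim P(n) = \iiint \phi = 1$. Because the convergence $K_\varepsilon \to 1$ on this set is uniform, the three limits may in fact be taken in any order, and in particular in the sequential order stated in the theorem.

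For composite $n$, I would take $\delta \to 0$ first. Uniform convergence $z(t,u,v) \to z_0(u) := n/(2 + (n-2)u)$ reduces the integrand to $\Phi(u)\, K_\varepsilon(z_0(u))$, where $\Phi(u) := \iint \phi(t,u,v)\, dt\, dv$ is the marginal density in $u$. The curve $z_0$ is strictly decreasing on $[0,1]$ and hits $\mathbb{Z}$ exactly on the finite set $U_n = \{(d-2)/(n-2) : d \mid n,\ 2 \le d \le n-1\}$, which is nonempty precisely because $n$ is composite. The candidate constant is $c_n := \lim_{\varepsilon \to 0,\, p \to \infty} \int_0^1 \Phi(u)\, K_\varepsilon(z_0(u))\, du$, and I would try to show $c_n < 1$ by the same local suppression argument used in Step~5 of Theorem~\ref{thm:constructive}, transferred to the one-dimensional integral around each $u_0 \in U_n$.

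The hard part will be precisely this last strict inequality. The naive estimate fails, because the suppression window $\{u : |z_0(u) - n/d| \lesssim \sqrt{\varepsilon}\}$ shrinks in measure as $\varepsilon \to 0$, while on its complement $K_\varepsilon \to 1$; a direct application of dominated convergence then gives $c_n = 1$, not $c_n < 1$. To salvage the stated gap, I would couple the parameters, e.g.\ take $p \to \infty$ fast enough relative to $\varepsilon \to 0$ that the aggregated deficit $\int_0^1 \Phi(u)\,(1 - K_\varepsilon(z_0(u)))\,du$ has a positive $\liminf$ contributed by the neighbourhoods of $U_n$. Making this coupling explicit, verifying it for each of the example kernels, and reconciling it with the purely sequential wording of the theorem is the principal technical obstacle that the proof must overcome.
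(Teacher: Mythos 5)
Your diagnosis of the composite case is correct, and the paper's own proof does \emph{not} fill the gap you identified — it commits exactly the error you anticipated. In the paper's Step~2 for composite $n$, the radius $\gamma$ of the neighbourhood $V$ is chosen so that $K_{\varepsilon,p}(z) < \eta$ whenever $|z - n/d| < \gamma$; for each kernel on the paper's list this forces $\gamma \to 0$ as $\varepsilon \to 0$ (for the sine kernel the suppression window has width $O(\sqrt{\varepsilon})$), and hence $\alpha = \iiint_V \phi$ shrinks to $0$ along with it. The paper's final line nevertheless passes to the limit $\varepsilon \to 0$, $p \to \infty$ and writes $\lim P(n) \le 1 - \alpha < 1$ as though $\alpha$ were a constant independent of $\varepsilon$ and $p$. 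It is not. Your dominated-convergence calculation shows what actually happens in the stated sequential limit: once $\delta \to 0$, the set $\{u \in [0,1] : z_0(u) \in \mathbb{Z}\}$ is finite and Lebesgue-null, so $K_{\varepsilon,p}(z_0(u)) \to 1$ for a.e.\ $u$ and $\int_0^1 \Phi(u)\, K_{\varepsilon,p}(z_0(u))\, du \to 1$. Thus the sequential limit equals $1$ for composite $n$ just as for prime $n$, and the asserted $c_n < 1$ has no valid proof. The coupling you contemplate cannot rescue it under the theorem's own sequential wording either, since with $\delta$ and $\varepsilon$ already sent to $0$, $K_{\varepsilon,p}(z) \to 1$ pointwise for every $z \notin \mathbb{Z}$ for each of the proposed kernels, so the final $p \to \infty$ limit has nothing left to suppress.

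Two secondary points. First, the hitting set of $z_0$ is not $U_n = \{(d-2)/(n-2) : d \mid n\}$: since $z_0(u) = n/(2+(n-2)u)$ continuously sweeps $[1, n/2]$, it crosses \emph{every} integer $k$ in that range, divisor or not. Consequently the uniform gap $\Delta > 0$ that you import from the paper for the prime case does not exist in the continuous-$u$ setting; the paper itself concedes that $z_0$ hits $\mathbb{Z}$ at isolated $u$ and then incoherently asserts $\Delta_n := \min_u \min_k |z_0(u)-k| > 0$. Second, your Case~1 conclusion $\lim P(n) = 1$ nonetheless survives via dominated convergence over the complement of a null set — but this is exactly the same argument that forces $\lim P(n) = 1$ for composite $n$, which is why the dichotomy in the theorem cannot be established from the given definition of $P(n)$ without either additional hypotheses on $\phi$ (concentrating persistent mass near the divisor hyperplanes as $\varepsilon \to 0$) or a genuine coupling of $\delta$ with $\varepsilon, p$, both of which the theorem's statement forgoes.
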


\begin{proof}

\textbf{Case 1: \( n \) is prime.}

\textit{Step 1: Denominator structure and rational obstruction.}  
Let
\[
z(t,u,v) = \frac{n + \delta \psi(t)}{2 + (n - 2) u + \delta \psi(v)}.
\]
Define the zeroth-order approximation (for \( \delta = 0 \)):
\[
z_0(u) := \frac{n}{2 + (n - 2)u}.
\]
Since \( n \) is prime, \( z_0(u) \notin \mathbb{Z} \) for all \( u \in [0,1] \) except possibly at isolated values (e.g., \( u = 0 \) or \( u = 1 \)). Thus:
\[
\Delta_n := \min_{u \in [0,1]} \min_{k \in \mathbb{Z}} |z_0(u) - k| > 0.
\]

\textit{Step 2: Stability under perturbation.}  
Let \( \delta < \delta_0 := \frac{\Delta_n}{4n} \). Then:
\[
\left| z(t,u,v) - z_0(u) \right| < \frac{\Delta_n}{2} \quad \Rightarrow \quad \mathrm{dist}(z(t,u,v), \mathbb{Z}) \geq \frac{\Delta_n}{2}.
\]

\textit{Step 3: Kernel convergence.}  
There exist \( \varepsilon_0 > 0 \) and \( p_0 \in \mathbb{N} \) such that for all \( \varepsilon < \varepsilon_0 \), \( p > p_0 \), we have:
\[
|K_{\varepsilon,p}(z) - 1| < \eta \quad \text{for all } z \text{ with } \mathrm{dist}(z, \mathbb{Z}) \geq \frac{\Delta_n}{4}.
\]

\textit{Step 4: Integral estimate.}  
Since \( \phi \) is normalized, we obtain:
\[
|P(n) - 1| = \left| \iiint \phi(t,u,v)(K_{\varepsilon,p}(z(t,u,v)) - 1)\, dt\, du\, dv \right| < \eta.
\]
Letting \( \eta \to 0 \) completes the proof for primes.

\medskip

\textbf{Case 2: \( n \) is composite.}

\textit{Step 1: Existence of exact rational hit.}  
Let \( d \in [2,n-1] \) be a proper divisor of \( n \). Define:
\[
u_0 = \frac{d - 2}{n - 2}, \quad z(0,u_0,0) = \frac{n}{d} \in \mathbb{Z}.
\]

\textit{Step 2: Constructive neighborhood around integer ratio.}  
Due to continuity of \( z(t,u,v) \), for any small \( \gamma > 0 \) there exists a neighborhood \( V \subset [0,1]^3 \) around \( (0, u_0, 0) \) such that:
\[
|z(t,u,v) - \tfrac{n}{d}| < \gamma \quad \forall (t,u,v) \in V.
\]
Choose \( \gamma \) small enough that \( K_{\varepsilon,p}(z) < \eta \) on this set. Let:
\[
\alpha := \iiint_V \phi(t,u,v) \, dt\,du\,dv > 0.
\]

\textit{Step 3: Integral estimate.}  
Split the integral:
\[
P(n) = \iiint_V \phi K_{\varepsilon,p} + \iiint_{[0,1]^3 \setminus V} \phi K_{\varepsilon,p}.
\]
Using \( K_{\varepsilon,p} \leq \eta \) on \( V \), and \( K_{\varepsilon,p} \leq 1 \) elsewhere:
\[
P(n) \leq \eta \cdot \alpha + (1 - \alpha) = 1 - \alpha(1 - \eta).
\]

\textit{Conclusion.}  
Letting \( \varepsilon \to 0 \), \( p \to \infty \), we obtain:
\[
\lim_{\substack{\varepsilon \to 0 \\ p \to \infty}} P(n) \leq 1 - \alpha < 1.
\]
This shows the existence of a strict upper bound \( c_n := 1 - \alpha < 1 \), depending on the size of the neighborhood where the kernel vanishes. 
\end{proof}

\begin{lemma}[Strict Separation of Primes and Composites]
Let \( n \) be a prime number and \( m \) a composite number with \( n, m \geq 2 \). Then for any fixed choice of parameters \( \delta > 0 \), \( \varepsilon > 0 \), \( p \in \mathbb{N} \), the corresponding values of the smooth primality filter satisfy:
\[
P(n) > P(m).
\]
\end{lemma}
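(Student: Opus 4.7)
My plan is to run the two halves of the proof of \cref{thm:asymptotic} in parallel, extracting a quantitative lower bound for $P(n)$ and a matching upper bound for $P(m)$ that share the \emph{same} triple $(\delta,\varepsilon,p)$, and then to compare them.

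First I would handle the prime $n$ exactly as in Case 1 of \cref{thm:asymptotic}: set $z_0(u) = n/(2+(n-2)u)$, let $\Delta_n = \min_{u \in [0,1]} \mathrm{dist}(z_0(u),\mathbb Z)$, and use uniform continuity of $z$ in $(t,u,v)$ to confine $z(t,u,v)$ to a $\Delta_n/2$-tube about $z_0(u)$ whenever $\delta$ is sufficiently small. On that tube $K_\varepsilon$ admits a positive lower bound $\mu_n = \mu_n(\varepsilon,p,\Delta_n)$, yielding $P(n) \geq \mu_n$, with $\mu_n \to 1$ in the full limit. In parallel, for the composite $m$, I would pick a proper divisor $d$, set $u_0 = (d-2)/(m-2)$, and choose a neighborhood $V_m \ni (0,u_0,0)$ on which $|z - m/d| < \gamma$ for a $\gamma$ small enough that $K_\varepsilon \leq \kappa_m < 1$ throughout $V_m$. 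Splitting the integral over $V_m$ and its complement gives
\[
P(m) \;\leq\; \alpha_m \kappa_m + (1-\alpha_m) \;=\; 1 - \alpha_m(1-\kappa_m), \qquad \alpha_m := \iiint_{V_m}\phi \;>\; 0.
\]
The claim $P(n) > P(m)$ then reduces to verifying $\mu_n > 1 - \alpha_m(1-\kappa_m)$ for the fixed $(\delta,\varepsilon,p)$, a comparison that I would drive by shrinking $\gamma$ (keeping $\alpha_m$ bounded below by continuity of $\phi$) against the strict inequality $K_\varepsilon(k) < 1$ at the integer $k = m/d$.

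The main obstacle is the universality clause ``for any fixed $(\delta,\varepsilon,p)$''. Two points must be addressed. First, for mild parameter choices (say $\varepsilon$ large or $p$ small) the kernel is close to $1$ everywhere, so the gap between $\mu_n$ and $1-\alpha_m(1-\kappa_m)$ shrinks and a naive comparison can fail; one would need either to restrict the admissible range of the parameters or to sharpen both estimates so that the strict inequality survives pointwise. Second, and more fundamentally, the hypothesis $\Delta_n > 0$ is delicate: because $u \mapsto z_0(u)$ continuously sweeps the interval $[1, n/2]$, it crosses every intermediate integer, so $\Delta_n = 0$ for all $n \geq 4$, prime or composite. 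To rescue the argument one would have to excise small neighborhoods of these \emph{non-divisor} integer crossings from the prime side, bound their $\phi$-mass, and then argue that the divisor-induced hit at the interior point $(0,u_0,0)$ for the composite contributes strictly more integer-proximity mass than the cumulative incidental crossings of $z_0$ do for the prime. Making that volumetric dominance precise --- essentially distinguishing a ``structural'' divisor hit from an ``incidental'' hit with $\mathbb Z$ --- is where I expect the real work to lie.
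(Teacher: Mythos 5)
Your plan mirrors the paper's own proof step for step: a uniform kernel lower bound on the prime side, the split $P(m)\le 1-\alpha_m(1-\kappa_m)$ on the composite side, and a subtraction at the end. What you did that the paper did not is actually test the premise $\Delta_n>0$ --- and you are right that it fails. Since $z_0(u)=n/(2+(n-2)u)$ runs continuously from $n/2$ at $u=0$ down to $1$ at $u=1$, the intermediate value theorem forces it through every integer $k$ with $1\le k\le n/2$; for $n=5$ one has $z_0(1/6)=2$. So $\Delta_n=0$ for every prime $n$, the kernel has genuine drops on the prime side, and the uniform bound $\mathrm{dist}(z(t,u,v),\mathbb Z)\ge\Delta$ invoked in Step 2 of the paper's proof (imported from Case 1 of \cref{thm:asymptotic}) is false. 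Indeed the paper's own worked example for $n=5$ asserts that $z_0$ ``avoids all integers'' while sweeping the interval $[1,2.5]$, which is impossible. You also correctly flag the universality clause: for mild parameters (small $p$, large $\varepsilon$) both of your bounds degenerate and the final subtraction has no slack, while the paper's Step 3 conclusion $P(n)-P(m)>\alpha(1-\eta)$ silently treats $\eta$ as both the kernel's excess above $1-\eta$ for the prime and its deficiency below $\eta$ for the composite, which is not a property of ``any fixed $(\delta,\varepsilon,p)$.''

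Your proposed repair --- excising the incidental integer crossings on the prime side and showing the divisor-induced hit for the composite contributes strictly more $\phi$-mass --- is the right instinct, but you have not carried it out, and it is not obvious it can succeed without new input: the incidental hit at $z(0,1/6,0)=2$ for $n=5$ and the divisor hit at $z(0,0,0)=2$ for $m=4$ produce suppression regions of comparable character (both are thin tubes around a 2-surface in $[0,1]^3$), so a ``structural versus incidental'' distinction would need to be made quantitative. In short, your proposal is incomplete, but your diagnosis is sharper than the paper's own argument; the gap you found is present in the paper's proof as well, and as written the lemma is not established by either.
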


\begin{proof}
Fix \( \delta > 0 \), \( \varepsilon > 0 \), \( p \in \mathbb{N} \). Let \( K_{\varepsilon,p}(z) \) be a kernel function as defined above, satisfying:
\[
K_{\varepsilon,p}(z) \approx 0 \text{ near } z \in \mathbb{Z}, \quad
K_{\varepsilon,p}(z) \approx 1 \text{ elsewhere}.
\]

\textit{Step 1: Behavior for composite \( m \).}  
Since \( m \) is composite, there exists a proper divisor \( d \in [2, m-1] \). Let:
\[
u_0 = \frac{d - 2}{m - 2}, \quad t = v = 0.
\]
Then:
\[
z(0, u_0, 0) = \frac{m}{d} \in \mathbb{Z},
\]
and due to continuity, there exists a neighborhood \( V_m \subset [0,1]^3 \) of \( (0,u_0,0) \) such that \( K_{\varepsilon,p}(z(t,u,v)) \ll 1 \) for all \( (t,u,v) \in V_m \). Since \( \phi > 0 \), the integral includes a strictly suppressed region:
\[
P(m) \leq \eta \cdot \alpha + (1 - \alpha) = 1 - \alpha(1 - \eta) < 1,
\]
for some \( \alpha := \iiint_{V_m} \phi > 0 \) and small \( \eta \).

\textit{Step 2: Behavior for prime \( n \).}  
Since \( n \) is prime, the ratio:
\[
z(t,u,v) = \frac{n + \delta \psi(t)}{2 + (n - 2) u + \delta \psi(v)}
\]
does not become close to an integer for any \( (t,u,v) \in [0,1]^3 \), as shown in the proof of Theorem~\ref{thm:asymptotic}. Therefore, for some fixed \( \Delta > 0 \), we have:
\[
\mathrm{dist}(z(t,u,v), \mathbb{Z}) \geq \Delta \quad \text{uniformly}.
\]
Then, the kernel satisfies \( K_{\varepsilon,p}(z(t,u,v)) \geq 1 - \eta \), hence:
\[
P(n) = \iiint \phi(t,u,v) K_{\varepsilon,p}(z(t,u,v)) \, dt\,du\,dv > 1 - \eta.
\]

\textit{Step 3: Comparison.}  
Let \( \alpha > 0 \) be as above. Then:
\[
P(n) - P(m) > (1 - \eta) - [1 - \alpha(1 - \eta)] = \alpha(1 - \eta) > 0.
\]

\textit{Conclusion.}  
This shows that for any fixed \( \delta, \varepsilon, p \), the function \( P(n) \) strictly separates primes from composites:
\[
P(n) > P(m).
\]
\end{proof}

\subsection{Discussion: Constructive vs Asymptotic Formulations}

The two theorems above reflect complementary aspects of the smooth primality function \( P(n) \).

\paragraph{Constructive theorem (\ref{thm:constructive}):}

This version is \emph{finite and effective}. For any bounded domain \( \{2, \dots, N-1\} \), we can explicitly tune the parameters \( \delta, \varepsilon, p \) such that \( P(m) \) approximates the prime indicator function \( \chi_{\mathrm{prime}}(m) \) up to any given precision \( \eta \). This makes \( P \) suitable for numerical applications, approximation schemes, and machine learning contexts where exact primality is replaced by smooth scoring.

\paragraph{Asymptotic theorem (\ref{thm:asymptotic}):}

This version expresses a deep analytical convergence result. It shows that \( P(n) \) is an \emph{asymptotic approximation} of the prime indicator function in the limit of vanishing smoothness (\( \delta, \varepsilon \to 0 \)) and infinite sharpness (\( p \to \infty \)). It demonstrates that the function \( P \) analytically interpolates primality in the large-scale or symbolic limit.

\paragraph{Connection and complementarity:}

\begin{itemize}
  \item The constructive theorem guarantees \emph{practical realizability} on any finite interval of integers.
  \item The asymptotic theorem guarantees that this approximation becomes exact in the limit, without needing to tune parameters for each \( n \).
  \item Together, they show that \( P(n) \) is not only smooth and theoretically meaningful, but also effective and computationally applicable.
\end{itemize}

Such dual formulation mirrors similar dualities in number theory: local versus global, finite approximations versus asymptotic limits. Here, smooth arithmetic is approximated both from below and from infinity.

\section{Convergence and Smoothness Analysis}

In this section, we analyze the sensitivity and asymptotic behavior of the smooth primality function \( P(n) \) with respect to the parameters \( \delta \), \( \varepsilon \), and \( p \), and establish its regularity as a function on \( \mathbb{R}_+ \).

\subsection{Refined Error Estimates}

Let \( n \geq 2 \) and assume fixed kernel \( K \) and density \( \phi \). We define the smoothed primality indicator as:

\[
P_{\delta, \varepsilon, p}(n) = \iiint_{[0,1]^3} \phi(t,u,v) \cdot K_{\varepsilon, p}\left( \frac{x(t)}{y(u,v)} \right) \, dt \, du \, dv.
\]

For prime values of \( n \), the approximation error satisfies
\[
1 - P(n) = O(\varepsilon + \delta),
\]
where the implied constant depends on the choice of kernel and on the minimal distance between the ratio \( x(t)/y(u,v) \) and the nearest integer over the integration domain. This estimate indicates that, by choosing \( \varepsilon \) and \( \delta \) sufficiently small, the deviation of \( P(n) \) from the ideal value \( 1 \) can be made arbitrarily small.

For composite numbers with smallest divisor \( d \), there exists a region \( U \subset [0,1]^3 \) where the integrand is sharply suppressed. Then:

\[
P(n) \leq 1 - \alpha + O(\varepsilon),
\]

where \( \alpha = \iiint_U \phi > 0 \), and the estimate can be made uniform on intervals of \( n \) with bounded number of small divisors.

\subsection{Asymptotic Behavior for Large \texorpdfstring{\( n \)}{n}}

An important question is whether the difference \( |P(n) - \chi_{\mathrm{prime}}(n)| \) vanishes for large \( n \), assuming fixed parameters. This is not generally true: for fixed \( \delta \), the ratio \( x(t)/y(u,v) \) becomes less sensitive to integer alignment as \( n \to \infty \). Therefore, to preserve precision, \( \delta \) and \( \varepsilon \) must scale with \( n \), e.g.:

\[
\delta(n) \sim \frac{1}{\log n}, \quad \varepsilon(n) \sim \frac{1}{n^c}.
\]

\subsection{Dependence on \texorpdfstring{\( p \)}{p}}

The exponent \( p \) in the kernel plays a crucial role in controlling the sharpness of suppression. For the sine-based kernel:

\[
K_{\varepsilon,p}(z) = \left( \frac{\sin^2(\pi z)}{\sin^2(\pi z) + \varepsilon} \right)^p,
\]

we have:

\[
K_{\varepsilon,p}(z) \approx
\begin{cases}
1, & \text{if } \operatorname{dist}(z, \mathbb{Z}) \gg \sqrt{\varepsilon}, \\
\varepsilon^p, & \text{if } z \in \mathbb{Z}.
\end{cases}
\]

Thus, increasing \( p \) exponentially sharpens the contrast between integer and non-integer values.

\subsection{Smoothness}

Since all components — \( x(t) \), \( y(u,v) \), \( \psi(s) \), \( K(z) \), and \( \phi(t,u,v) \) — are \( C^\infty \), and composition and integration preserve smoothness, it follows that:

\[
P(n) \in C^\infty(\mathbb{R}_+).
\]

Moreover, for practical purposes, this means that \( P(n) \) can be differentiated, expanded, and approximated using standard techniques of analysis, which is not possible for the discrete characteristic function \( \chi_{\mathrm{prime}}(n) \).

\section{Variants of the Summed Integral Formulation}

\subsection{A Summed Integral Variant}

The single-integral formulation of \( P(n) \), while analytically elegant, may fail to reliably distinguish primes from composites unless a sufficiently localized weight function is used. To address this, we introduce an alternative construction that averages over possible divisors.

\paragraph{Motivation.}

If \( n \) is composite, then there exists an integer \( m \in \{2, \dots, n-1\} \) such that \( m \mid n \). The ratio \( n/m \) is then an integer, and our kernel \( K(z) \) is designed to detect such ratios by sharply decreasing near integer values. However, detecting a single such ratio within a smooth integral may not produce a large enough suppression.

To amplify the effect of divisors, we define a new function \( P(n) \) by averaging over all possible \( m < n \), evaluating a triple integral for each candidate divisor. This formulation more reliably distinguishes primes from composites.

\paragraph{Definition.}

We define the summed variant of the smooth primality function as:

\[
P(n) = \frac{1}{n - 2} \sum_{m = 2}^{n - 1}
\iiint_{[0,1]^3}
\phi(t, u, v) \cdot
K\left(
\frac{x_n(t)}{y_{n,m}(u,v)}
\right)
\, dt \, du \, dv,
\]

where:
\begin{itemize}
  \item \( x_n(t) = n + \delta \cdot \psi(t) \),
  \item \( y_{n,m}(u,v) = m + \delta \cdot \psi(v) \),
  \item \( \psi(s) = \sin^2(\pi s) \), a smooth periodic "bump" function,
  \item \( K(z) = \left( \dfrac{\sin^2(\pi z)}{\sin^2(\pi z) + \varepsilon} \right)^p \), the smoothing kernel,
  \item \( \phi(t, u, v) \) is a smooth positive weight function (often chosen as \( \phi \equiv 1 \)).
\end{itemize}

The parameters \( \delta > 0 \), \( \varepsilon > 0 \), and \( p \in \mathbb{N} \) control the degree of smoothing, the suppression near integer points, and the sharpness of the transition, respectively.

\paragraph{Behavior.}

\begin{itemize}
  \item If \( n \) is prime, then none of the values \( m \in \{2, \dots, n-1\} \) divide \( n \), and all integrals in the sum return values close to 1, leading to \( P(n) \approx 1 \).
  \item If \( n \) is composite, then at least one \( m \mid n \) exists. For such \( m \), the argument of the kernel is close to an integer, and \( K(z) \approx 0 \), thus reducing the average and yielding \( P(n) < 1 \).
\end{itemize}

\paragraph{Advantages.}

This variant has the following key benefits:
\begin{itemize}
  \item It amplifies the signal of divisibility by averaging over all possible \( m \),
  \item It avoids the need for highly tuned localization in the weight function,
  \item It produces more reliable separation between prime and composite inputs,
  \item It remains entirely smooth and differentiable in all parameters.
\end{itemize}

In practice, this formulation shows excellent empirical separation between primes and composites even with modest parameter choices (e.g., \( \delta = 0.05 \), \( \varepsilon = 10^{-4} \), \( p = 6 \)), and serves as a solid candidate for further analytical and computational exploration.

\subsection{Reduction to One-Dimensional Integral}

While the summed integral variant provides reliable primality discrimination, it may be computationally expensive due to the triple integration performed for each divisor \( m \). In this section, we introduce a simplified formulation that replaces the triple integral with a single integral over a synchronized smoothing path.

\paragraph{Synchronized integral path.}

The key idea is to merge the parametrizations of \( x(t) \) and \( y(u,v) \) into a common function of a single parameter \( t \in [0,1] \). Instead of sampling over a 3D domain, we move synchronously along \( x(t) \) and \( y(t) \), ensuring that for some \( t \), the ratio \( x(t)/y(t) \) equals a rational approximation to \( n/m \), where \( m \) is a potential divisor.

We define:
\[
x(t) = n + \delta \cdot \psi(t), \qquad y_m(t) = m + \delta \cdot \psi(t),
\]
where \( \psi(t) = \sin^2(\pi t) \) is a smooth bump function with \( \psi(0) = \psi(1) = 0 \) and \( \psi(1/2) = 1 \).

\paragraph{Definition of the reduced function.}

We now define the simplified smooth primality approximation as:
\[
P(n) = \frac{1}{n - 2} \sum_{m = 2}^{n - 1}
\int_0^1 K\left( \frac{x(t)}{y_m(t)} \right) \, dt,
\]
where \( K(z) \) is the same periodic kernel as before:
\[
K(z) = \left( \frac{\sin^2(\pi z)}{\sin^2(\pi z) + \varepsilon} \right)^p,
\]
and \( \delta > 0 \), \( \varepsilon > 0 \), \( p \in \mathbb{N} \) are tunable parameters.

\paragraph{Behavior and motivation.}

- If \( n \) is divisible by \( m \), then at \( t = 0 \), we have \( x(0)/y(0) = n/m \in \mathbb{Z} \), and the kernel is suppressed.
- If \( n \) is not divisible by \( m \), then for all \( t \in [0,1] \), the ratio \( x(t)/y_m(t) \) stays away from integers, and the kernel remains near 1.
- This method preserves the suppression behavior of the full 3D integral while dramatically reducing computational cost.

\paragraph{Advantages.}

This simplified version:
\begin{itemize}
  \item Requires only a one-dimensional integral per divisor;
  \item Maintains smoothness in the parameter \( t \);
  \item Is much faster to compute numerically;
  \item Continues to reliably distinguish primes from composites.
\end{itemize}

It thus serves as a practical middle ground between the full integral and purely symbolic methods. For numerical applications, the one-dimensional version may be the preferred choice.

\paragraph{Justification of the synchronized path.}

The simplification from a triple integral to a single integral relies on replacing a volume average over a cube \( [0,1]^3 \) by an average along a curve in parameter space. Specifically, we synchronize the bump perturbations of the numerator and denominator:
\[
x(t) = n + \delta \cdot \psi(t), \quad y_m(t) = m + \delta \cdot \psi(t).
\]

This choice ensures that the ratio \( x(t)/y_m(t) \) passes through \( n/m \) at \( t = 0 \), since \( \psi(0) = 0 \), and evolves smoothly in a controlled manner as \( t \to 1 \). The periodic kernel \( K(z) \) is designed to detect when this ratio approaches an integer. Thus, the one-dimensional integral retains the key interaction: whether \( n/m \in \mathbb{Z} \) or not.

Formally, this amounts to replacing the full convolution in parameter space with a diagonal section:
\[
(t, u, v) \mapsto (t, u(t), v(t)) = (t, t, t),
\]
with identical behavior in all directions. While this is not an exact equivalence, it retains the main detection axis for divisibility.

\paragraph{Information loss and approximation.}

The reduction to one dimension sacrifices the generality of the 3D domain, particularly its ability to capture "off-axis" configurations of \( x(t)/y(u,v) \). In the full integral, even if \( x(t)/y(u,v) \) is never exactly equal to \( n/m \), some regions may bring it arbitrarily close, contributing to suppression via the kernel. The 1D formulation may miss these unless \( t = 0 \) or another specific value aligns the ratio with an integer.

Nonetheless, empirical evaluation shows that the main suppression occurs precisely near such alignment points, and thus the reduced integral still captures the dominant effect.

\paragraph{Choice of bump function \( \psi(t) \).}

The default choice \( \psi(t) = \sin^2(\pi t) \) provides:
\begin{itemize}
  \item smoothness of all orders (\( C^\infty \)),
  \item boundary vanishing (\( \psi(0) = \psi(1) = 0 \)),
  \item symmetric profile peaking at \( t = 0.5 \).
\end{itemize}

Other candidates may include:
\begin{itemize}
  \item \( \psi(t) = t^2(1 - t)^2 \), a quartic bump,
  \item \( \psi(t) = \exp\left(-\frac{1}{t(1 - t)}\right) \), a compactly supported analytic bump (after cutoff).
\end{itemize}

The choice of \( \psi \) affects the sensitivity profile of the kernel along the integration path. A sharper bump leads to faster departure from integer-like ratios, possibly weakening suppression. Conversely, a flatter bump broadens the effective detection region but may reduce contrast.

\paragraph{Role of \( \delta \) in the one-dimensional setting.}

In the full 3D integral, \( \delta \) controls the spread in all three parameters. In the 1D reduction, it directly controls the rate of change in the ratio:
\[
\frac{x(t)}{y(t)} = \frac{n + \delta \cdot \psi(t)}{m + \delta \cdot \psi(t)}.
\]

This expression satisfies:
\[
\frac{x(t)}{y(t)} = \frac{n}{m} + \frac{\delta}{m} \cdot \left( \frac{\psi(t)(m - n)}{m + \delta \psi(t)} \right),
\]
which shows that the deviation from the rational value \( n/m \) is directly proportional to \( \delta \), and inversely to \( m \). Thus, choosing \( \delta \) too large smears out the ratio and destroys precision; too small, and the integral becomes trivial. Hence, \( \delta \) must be tuned to balance detection sensitivity and numerical stability.

\paragraph{Summary.}

Although the one-dimensional reduction loses some expressiveness compared to the 3D model, it remains analytically well-motivated and computationally efficient. It captures the essential behavior required to distinguish prime from composite inputs using periodic kernel suppression, and offers a practical trade-off between complexity and effectiveness.

\subsection{Localized Bell Functions and the Elimination of Discreteness}

In the previous sections, we introduced two summed variants of the smooth primality function: one based on a triple integral over candidate divisors, and another using a simplified one-dimensional smoothing path. These constructions offer improved discrimination between prime and composite numbers by explicitly incorporating all potential divisors \( m < n \) into the averaging process.

While effective in sharpening the separation between primes and composites, these approaches rely on a discrete sum over integers \( m \in \{2, \dots, n-1\} \). This discrete structure introduces irregularities and prevents the function \( P(n) \) from being fully differentiable with respect to \( n \). In particular, any shift in \( n \) results in an abrupt change in the number of terms being averaged, which interrupts the smoothness of the model.

To overcome this, we now seek a formulation of \( P(n) \) that is fully continuous and differentiable over the real line. Our goal is to replace the discrete sum over candidate divisors with a smooth integral over a continuous range \( m \in [2, n) \), while still concentrating the averaging process near integer values. This motivates the introduction of a smooth localization mechanism: a system of \emph{bell-shaped functions} centered at integers, which emulate the discrete sum but within a continuous framework.

In what follows, we describe the construction of such bell functions and explain how they allow us to eliminate the discrete sum and obtain a fully smooth primality indicator.

\paragraph{Localized bell functions.}  
We define the localization profile \( \Phi_\sigma : \mathbb{R} \to \mathbb{R}_+ \) by:
\[
\Phi_\sigma(m) := \sum_{k=2}^{\infty} \Phi\left( \frac{m - k}{\sigma} \right),
\]
where \( \Phi \) is a fixed smooth bump function centered at zero, such as:

\begin{itemize}
  \item \textbf{Gaussian profile:}
  \[
  \Phi(x) = \exp(-x^2)
  \]
  \item \textbf{Compactly supported bump:}
  \[
  \Phi(x) =
  \begin{cases}
  \exp\left( -\frac{1}{1 - x^2} \right), & \text{if } |x| < 1 \\
  0, & \text{otherwise}
  \end{cases}
  \]
  \item \textbf{Sine-squared bell:}
  \[
  \Phi(x) =
  \begin{cases}
  \sin^2(\pi x), & \text{if } |x| < 1/2 \\
  0, & \text{otherwise}
  \end{cases}
  \]
\end{itemize}

In all cases, the parameter \( \sigma > 0 \) controls the sharpness of localization. As \( \sigma \to 0 \), \( \Phi_\sigma(m) \) concentrates its mass more tightly around integers and approximates the behavior of an indicator function for integer points. The sum over \( k \in \mathbb{N} \) ensures that only relevant points in the range \( [2, n-1] \) contribute.

\paragraph{Transition to integrals.}  
With the localization function \( \Phi_\sigma(m) \) in place, we can now reinterpret the discrete sum over divisors as a smooth integral:
\[
\sum_{m=2}^{n-1} f(m) \quad \longrightarrow \quad \int_2^n \Phi_\sigma(m) \cdot f(m)\, dm.
\]
This construction preserves the logic of summing over discrete candidates while enabling a transition to fully differentiable and integrable expressions. It also aligns naturally with the overall smooth structure of the function \( P(n) \), making the entire framework amenable to analysis through calculus, rather than combinatorics.

In the next section, we formulate the smooth primality function directly in terms of such a localized integral.

\subsection{A Smoothed Integral Variant}

To extend the summed primality function into a fully differentiable framework, we now introduce a smoothed integral formulation. This variant replaces the discrete sum over candidate divisors \( m \in \{2, \dots, n-1\} \) with a continuous integral over the range \( m \in [2, n) \), combined with a bell-shaped localization mechanism that concentrates weight around integers.

\paragraph{Definition.}

The smoothed integral variant of the primality function is then defined as:
\[
P(n) = \frac{1}{\int_2^n \Phi_\sigma(m)\, dm}
\int_2^n \Phi_\sigma(m) \cdot
\left(
\iiint_{[0,1]^3}
\phi(t,u,v) \cdot
K\left( \frac{x_n(t)}{y_{n,m}(u,v)} \right)
\, dt \, du \, dv
\right) dm,
\]
where:
\begin{itemize}
  \item \( x_n(t) = n + \delta \cdot \psi(t) \),
  \item \( y_{n,m}(u,v) = m + \delta \cdot \psi(v) \),
  \item \( \psi(t) = \sin^2(\pi t) \) is a smooth bump on \([0,1]\),
  \item \( K(z) = \left( \dfrac{\sin^2(\pi z)}{\sin^2(\pi z) + \varepsilon} \right)^p \) is the suppression kernel,
  \item \( \phi(t,u,v) \in C^\infty([0,1]^3) \) is a smooth positive normalized density.
\end{itemize}

\paragraph{Interpretation.}

This construction yields a fully smooth primality filter \( P(n) \in [0,1] \) over the continuous domain \( n \in \mathbb{R}_+ \). The inner integral performs smoothing over small perturbations of the ratio \( n/m \), while the outer integral averages over near-integer values of \( m \). Composite values of \( n \) exhibit sharp suppression due to resonant ratios \( n/m \in \mathbb{Z} \), whereas primes remain near the maximal value. The localization via \( \Phi_\sigma \) ensures the integral remains finite and selective, while removing the discontinuities associated with discrete summation.

\paragraph{Parameter tuning.}

The choice of localization scale \( \sigma \) plays a crucial role. If \( \sigma \) is too small, the bell function \( \Phi_\sigma(m) \) becomes sharply peaked, and \( P(n) \) may develop steep gradients around integer boundaries. This could cause large derivatives or numerical instability. Conversely, if \( \sigma \) is too large, the distinction between nearby integers may be blurred. A balanced approach is to adapt \( \sigma \) as a function of \( n \), for example using \( \sigma(n) = 1/\log n \), to ensure both stability and sharpness in primality detection.

\subsection{A Smoothed One-Dimensional Integral}

To reduce the computational complexity of the full three-dimensional integral while retaining the ability to distinguish primes from composites, we now introduce a one-dimensional smoothing formulation. This version collapses the parametrization of both numerator and denominator into a synchronized single-variable path, which simplifies the structure while preserving the essential filtering behavior of the kernel.

\paragraph{Synchronized smoothing path.}

We define the smoothed primality function along a one-dimensional curve:
\[
x(t) = n + \delta \cdot \psi(t), \qquad y_m(t) = m + \delta \cdot \psi(t),
\]
where \( \psi(t) = \sin^2(\pi t) \) is a standard smooth bump function on \([0,1]\), satisfying \( \psi(0) = \psi(1) = 0 \) and \( \psi(1/2) = 1 \). The ratio \( x(t)/y_m(t) \) traces a smooth neighborhood of \( n/m \), and the suppression kernel \( K(z) \) detects proximity to integers.

\paragraph{Smoothed integral version.}

We now define the one-dimensional smoothed version of \( P(n) \) by integrating over all potential divisors \( m \in [2, n) \), modulated by a localization function \( \Phi_\sigma(m) \) as before:
\[
P(n) = \frac{1}{\int_2^n \Phi_\sigma(m)\, dm}
\int_2^n \Phi_\sigma(m) \cdot
\left( \int_0^1 K\left( \frac{x(t)}{y_m(t)} \right) dt \right) dm,
\]
where:
\begin{itemize}
  \item \( K(z) = \left( \dfrac{\sin^2(\pi z)}{\sin^2(\pi z) + \varepsilon} \right)^p \) is the same smooth kernel as before,
  \item \( \Phi_\sigma(m) = \sum_{k=2}^\infty \Phi\left( \frac{m - k}{\sigma} \right) \) localizes the integral around integer values,
  \item \( \delta > 0 \), \( \varepsilon > 0 \), and \( p \in \mathbb{N} \) are tunable smoothness parameters.
\end{itemize}

\paragraph{Interpretation.}

This one-dimensional integral version provides a fully smooth primality score function over \( n \in \mathbb{R}_+ \), suitable for fast numerical approximation or analysis. For composite \( n \), there exist integer divisors \( m \) such that the ratio \( n/m \in \mathbb{Z} \), which is detected by the kernel as a local dip. The averaging process over \( m \) amplifies this suppression, while the localization \( \Phi_\sigma(m) \) ensures that only integer-like values contribute.

As in the multidimensional case, the choice of localization width \( \sigma \) plays a critical role: overly narrow localization may produce sharp transitions and large derivatives, while overly wide localization may blur the distinction between primes and composites. The resulting function \( P(n) \in [0,1] \) remains differentiable and captures the primality structure of integers with minimal computational overhead.

\section{Integral Reordering and Resonance Analysis}

In this section, we explore a reformulation of the smoothed one-dimensional integral approximation of the prime characteristic function \( P(n) \) by changing the order of integration. This reordering reveals new analytical structure and suggests possible approximations and interpretations.

We now introduce a reformulation of the smooth primality function based on changing the order of integration. This reveals a rich internal structure involving localized resonances, enables analytical approximations via Taylor moments, and leads to a new class of algorithms for primality testing based on soft suppression patterns.

\subsection{Rewriting the Smoothed Integral}

Recall the definition of the smoothed one-dimensional approximation:
\[
P(n) = \frac{1}{\int_2^n \Phi_\sigma(m)\, dm} \int_2^n \Phi_\sigma(m) \cdot \left( \int_0^1 K\left( \frac{x(t)}{y_m(t)} \right) dt \right) dm,
\]
where:
\begin{itemize}
  \item \( x(t) = n + \delta \cdot \psi(t) \),
  \item \( y_m(t) = m + \delta \cdot \psi(t) \),
  \item \( K(z) \) is a periodic suppression kernel,
  \item \( \Phi_\sigma(m) \) is a smooth localization function centered at integers.
\end{itemize}

We now change the order of integration:
\[
P(n) = \int_0^1 \left( \frac{1}{\int_2^n \Phi_\sigma(m)\, dm} \int_2^n \Phi_\sigma(m) \cdot K\left( \frac{n + \delta \psi(t)}{m + \delta \psi(t)} \right) dm \right) dt.
\]

This presents \( P(n) \) as an expectation over \( t \in [0,1] \) of a \emph{smoothed divisor projection}.

\subsection{Interpretation as Localized Projection}

For each fixed \( t \), the inner integral
\[
\int_2^n \Phi_\sigma(m) \cdot K\left( \frac{n + \delta \psi(t)}{m + \delta \psi(t)} \right) dm
\]
can be viewed as a convolution of the kernel \( K \) with the localized weight \( \Phi_\sigma(m) \), evaluated along the path \( m \mapsto x(t)/y_m(t) \). 

This integral accumulates the "resonance" between \( n \) and possible divisors \( m \), weighted by how well the perturbed ratio \( x(t)/y_m(t) \) approximates an integer.

\subsection{Approximation by Discrete Sum}

Since \( \Phi_\sigma(m) \) is localized near integers, we can approximate:
\[
\int_2^n \Phi_\sigma(m) \cdot K\left( \frac{x(t)}{y_m(t)} \right) dm \approx \sum_{k=2}^{n-1} \Phi_\sigma(k) \cdot K\left( \frac{x(t)}{k + \delta \psi(t)} \right).
\]

Thus, we recover a smoothed version of the discrete summed integral:
\[
P(n) \approx \int_0^1 \left( \sum_{k=2}^{n-1} w_k(t) \cdot K\left( \frac{x(t)}{k + \delta \psi(t)} \right) \right) dt,
\]
where \( w_k(t) = \Phi_\sigma(k) / \int_2^n \Phi_\sigma(m) dm \) acts as a normalized weight.

\subsection{Moment Expansion around Rational Resonances}

Let us fix \( t \in [0,1] \) and consider the expression:
\[
z(m) := \frac{x(t)}{y_m(t)} = \frac{n + \delta \psi(t)}{m + \delta \psi(t)}.
\]

For each integer \( k \in \mathbb{N} \), define \( m_k := n/k \). Near \( m_k \), the value of \( z(m) \) satisfies:
\[
z(m) \approx k + \sum_{r=1}^\infty a_r(m - m_k)^r.
\]

Assuming the kernel \( K(z) \in C^\infty \), we can expand it in a Taylor series around \( z = k \):
\[
K(z(m)) = \sum_{r=0}^\infty \frac{1}{r!} K^{(r)}(k) (z(m) - k)^r.
\]

Substituting this into the integral, we obtain:
\[
\int \Phi_\sigma(m) K(z(m)) dm \approx \sum_{r=0}^\infty \frac{1}{r!} K^{(r)}(k) \cdot \mu_r(k; t),
\]
where
\[
\mu_r(k; t) := \int \Phi_\sigma(m) (z(m) - k)^r dm
\]
is the \emph{\( r \)-th localized moment} around the rational resonance \( z = k \).

\subsection{\texorpdfstring{Expression of \( P(n) \) via Moments}{Expression of P(n) via Moments}}
Inserting the expansion into the reordered integral, we obtain:
\begin{equation}
P(n) \approx \int_0^1 \left( \sum_k \sum_{r=0}^\infty \frac{K^{(r)}(k)}{r!} \cdot \mu_r(k; t) \right) dt.
\label{eq:moment_expansion}
\end{equation}

This expresses \( P(n) \) as a smooth weighted sum over resonances \( k \), modulated by the distribution of \( \Phi_\sigma(m) \) and its interaction with the perturbed ratio.

\subsection{Moment Expansion Approach}

For composite \( n \) with smallest divisor \( d \), the dominant contribution comes from the vicinity of \( m = d \). Assume \( \Phi_\sigma(m) \) is symmetric around \( m = d \) and sharply peaked. Then odd-order moments vanish and the expansion simplifies to:
\[
P(n) \approx 1 - \frac{\Phi_\sigma(d)}{\int \Phi_\sigma} \left[
K(0) + \frac{K''(0)}{2} \left( \frac{\delta \psi(t)}{d} \right)^2 + O(\sigma^4)
\right]
\]

\paragraph{Example Calculation}
Let \( n = 15 \), so smallest divisor \( d = 3 \), and assume:
\begin{itemize}
  \item \( \delta = 0.1 \),
  \item \( \Phi_\sigma(d) / \int \Phi_\sigma \approx 0.92 \),
  \item \( K''(0) \approx 9.87 \).
\end{itemize}
Then:
\[
P(15) \approx 1 - 0.92 \cdot \left[ 0 + \frac{9.87}{2} \cdot \left(\frac{0.1}{3}\right)^2 \right] \approx 1 - 0.92 \cdot 0.0548 \approx 0.9496.
\]
This matches the suppression effect due to the presence of a proper divisor and shows how the moment expansion provides a fast estimate of \( P(n) \).

\subsection{Resonance Detection}
\label{sec:resonance-map}

The moment expansion formula~\eqref{eq:moment_expansion} expresses the smooth primality score \( P(n) \) as a sum over integer resonance indices \( k \), where each term measures how strongly the perturbed integral aligns with the ratio \( n/m \approx k \). Crucially, this formulation does not require prior knowledge of actual divisors of \( n \).

\paragraph{Resonance map.}
We define the localized contribution of each resonance as:
\[
A_k(n) := \int_0^1 \sum_{r=0}^{R} \frac{K^{(r)}(k)}{r!} \cdot \mu_r(k; t)\, dt,
\]
so that:
\[
P(n) \approx \sum_k A_k(n).
\]
This expansion expresses \( P(n) \) as a superposition of localized energy-like terms. Large negative peaks in \( A_k(n) \) indicate strong suppression at the corresponding resonance and suggest the presence of a divisor \( m \approx n/k \).

\paragraph{Detection without prior knowledge.}
Even without knowing a proper divisor \( d \mid n \), one can scan the resonance map \( k \mapsto A_k(n) \) for signs of compositeness. If \( n \) is composite, some \( k \in \{2, \dots, n-1\} \) will yield a significant contribution due to a resonance \( n \approx k \cdot m \) being met inside the localization window. If all \( A_k(n) \) are small, \( n \) is likely prime.

\paragraph{Probabilistic interpretation.}
The resonance mechanism acts as a soft sieve: rather than testing divisibility explicitly, it measures how much of the localized integral mass accumulates near integer ratios \( n/m \in \mathbb{Z} \). A value \( P(n) \approx 1 \) implies lack of such resonances; a dip in \( P(n) \) arises from constructive alignment, signaling compositeness.

\paragraph{Computational strategy.}
Since the localization function \( \Phi_\sigma \) decays rapidly, only a few \( k \) contribute significantly. The structure enables:

\begin{itemize}
  \item Truncated evaluation of \( A_k(n) \) with low-order moments;
  \item Adaptive refinement only around promising values of \( k \);
  \item Use of asymptotic approximations or precomputed kernel derivatives.
\end{itemize}

\subsection{Algorithmic Implications and Future Directions}

The resonance-based decomposition of \( P(n) \) suggests a variety of computational strategies and theoretical extensions.

\paragraph{Applications.}
\begin{itemize}
  \item \textbf{Soft compositeness detection:} peaks in \( k \mapsto A_k(n) \) serve as analytic indicators of likely divisibility.
  \item \textbf{Heuristic factor hints:} locations of suppression suggest possible values of \( m \approx n/k \), offering guidance for factorization.
  \item \textbf{Smooth filtering:} the function \( P(n) \) acts as a probabilistic pretest prior to exact primality checks.
  \item \textbf{Spectral insight:} the periodic structure of the kernel \( K \) hints at harmonic or Fourier-theoretic extensions.
  \item \textbf{Near-prime analysis:} partial suppressions in \( A_k(n) \) illuminate the structure of numbers with few or large prime factors.
\end{itemize}

\paragraph{Summary.}
\begin{itemize}
  \item The resonance map \( \{A_k(n)\} \) encodes latent arithmetic information.
  \item The formulation supports fast, localized approximations of \( P(n) \).
  \item This framework offers a bridge between analytic number theory and smooth algorithmic techniques.
\end{itemize}

\section{Comparison with Classical Methods}

This section contrasts the smooth primality function \( P(n) \) with traditional discrete and analytical approaches in number theory. We emphasize both theoretical and practical aspects, highlighting strengths, weaknesses, and opportunities of the smooth construction.

\subsection{Discrete Methods}

Classical algorithms for detecting primes, such as the Sieve of Eratosthenes, probabilistic tests like Miller-Rabin, or factorization techniques using smooth numbers~\cite{brent1999some}, rely on combinatorial properties or modular arithmetic. These methods have clear advantages in efficiency and simplicity, especially for large-scale computations.

However, they are fundamentally \emph{discrete} and thus not directly amenable to:
\begin{itemize}
    \item differentiation or integration,
    \item spectral or harmonic analysis,
    \item smooth approximations or interpolations,
    \item incorporation into continuous optimization frameworks.
\end{itemize}

In contrast, the function \( P(n) \in C^\infty \) admits all of the above and thus provides a new bridge between arithmetic and analysis.

\subsection{Analytic Number Theory}

In classical analytic number theory~\cite{montgomery2007multiplicative, titchmarsh1986theory}, the distribution of primes is studied via tools such as:
\begin{itemize}
    \item the Möbius function \( \mu(n) \),
    \item the von Mangoldt function \( \Lambda(n) \),
    \item the Riemann zeta function \( \zeta(s) \),
    \item generating series and Perron-type integrals.
\end{itemize}

These methods are extremely powerful in asymptotic analysis and prove deep theorems (e.g., Prime Number Theorem, zero-density estimates).

Yet, they often:
\begin{itemize}
    \item operate in the complex domain,
    \item rely on identities valid only almost everywhere or in average,
    \item are non-constructive in nature,
    \item and do not offer real-variable smooth approximations to the primality function.
\end{itemize}

The function \( P(n) \), by contrast, is a real-valued, fully smooth function whose behavior approximates primality \emph{pointwise} and \emph{constructively}, without appealing to zeta identities or residue calculus.

\subsection{Advantages of Smooth Approximation}

Key advantages of \( P(n) \) over classical methods include:
\begin{itemize}
    \item \textbf{Smooth interpolation:} \( P(n) \) provides a natural, continuous landscape over \( \mathbb{R}_+ \), capturing the irregularity of primes through regular functions.
    \item \textbf{Analytic tractability:} Differentiable structure allows application of calculus, PDE methods, and optimization techniques.
    \item \textbf{Fourier/spectral compatibility:} Smoothness makes \( P(n) \) suitable for Fourier and spectral analysis, potentially revealing hidden arithmetic structures.
    \item \textbf{Machine learning compatibility:} Unlike binary-valued functions, \( P(n) \) can serve as a continuous target in regression models, neural networks, or probabilistic priors.
\end{itemize}

\subsection{Limitations and Challenges}

Despite its flexibility, the smooth approach has limitations:
\begin{itemize}
    \item \textbf{Computational cost:} Evaluating the triple integral for \( P(n) \) is significantly more expensive than simple sieve checks.
    \item \textbf{Parameter sensitivity:} Performance depends on careful tuning of \( \delta \), \( \varepsilon \), and \( p \).
    \item \textbf{No guarantee of exactness:} \( P(n) \) is only an approximation and cannot definitively assert primality in the classical sense.
    \item \textbf{Lack of arithmetic identities:} Unlike Möbius or von Mangoldt functions, \( P(n) \) does not satisfy known multiplicative or convolution relations.
\end{itemize}

\subsection{Analogies and Broader Context}

Smooth approximations of discrete phenomena appear in many mathematical contexts:
\begin{itemize}
    \item \textbf{Sigmoid functions} in neural networks approximate binary thresholds.
    \item \textbf{Smooth Heaviside approximations} in variational methods and regularized optimization.
    \item \textbf{Spectral smoothings} in quantum mechanics and signal processing.
\end{itemize}

The function \( P(n) \) thus fits into a larger trend: replacing discrete, discontinuous objects with smooth analytic surrogates that enable deeper structural insights and new computational methods.

\section{Computation and Optimization}

In this section, we describe methods for computing the smooth primality function \( P(n) \), analyze its computational complexity, and propose possible optimizations and extensions, including numerical, algorithmic, and data-driven approaches.

\subsection{Numerical Integration Techniques}

The definition of \( P(n) \) involves a triple integral over the unit cube \( [0,1]^3 \), with a kernel that may exhibit sharp peaks or localized suppression near integer ratios. This poses challenges for efficient numerical evaluation.

\subsubsection{Low-order quadrature methods}

Simple methods such as:
\begin{itemize}
  \item midpoint rule,
  \item trapezoidal rule,
  \item Simpson's rule~\cite{granville1995smooth},
\end{itemize}
can be applied over discretized grids of \( t, u, v \in [0,1] \). These methods are easy to implement but may require fine grids to accurately capture sharp features of the kernel \( K(z) \), especially when \( p \) is large or \( \varepsilon \) is small.

\subsubsection{Monte Carlo integration}

Random sampling within the unit cube offers another option:

\[
P(n) \approx \frac{1}{N} \sum_{i=1}^{N} \phi(t_i,u_i,v_i) \cdot K\left( \frac{x(t_i)}{y(u_i,v_i)} \right),
\quad (t_i,u_i,v_i) \sim \text{Uniform}[0,1]^3.
\]

Monte Carlo methods are useful for high-dimensional integrals and can exploit variance reduction techniques, although they converge slowly (\( \sim N^{-1/2} \)).

\subsubsection{Adaptive integration}

Because the integrand may be sharply peaked near specific surfaces (e.g., near divisors), adaptive quadrature — dynamically refining the grid where the integrand changes rapidly — can significantly improve efficiency.

\subsection{Complexity Analysis}

Let \( q \) denote the number of points per axis in a uniform grid. Then, evaluating \( P(n) \) via triple quadrature has computational complexity:

\[
\mathcal{O}(q^3 \cdot c_K),
\]

where \( c_K \) is the cost of evaluating the kernel \( K(z) \) per point. For typical values \( q = 20 \sim 50 \), this may be acceptable for moderate \( n \), but becomes costly for large-scale analysis.

\subsection{Computational Optimizations}

Several approaches can reduce computational cost:

\begin{itemize}
  \item \textbf{Symmetric kernel simplification:} If the kernel \( K \) is symmetric and the density \( \phi \) is uniform, the number of required function evaluations may be halved or more.
  \item \textbf{Reduced parametrization:} One can explore reparametrizations such as synchronizing \( t = u = v \), leading to a one-dimensional integral capturing the core structure:
  \[
  P(n) \approx \int_0^1 K\left( \frac{x(t)}{y(t)} \right) \, dt.
  \]
  Though less accurate, this is computationally efficient.
  \item \textbf{Precomputation of kernel values:} For fixed \( \varepsilon \), values of \( K(z) \) on a dense grid may be cached to speed up evaluations.
  \item \textbf{Parallelization:} Since the integrand is separable per evaluation point, computation can be easily parallelized across CPU cores or GPUs.
\end{itemize}

\subsection{Learning-based Approximation}

Given that \( P(n) \in [0,1] \) behaves similarly to a smoothed classification function, it is natural to consider learning it as a regression model:

\begin{itemize}
  \item Train a neural network or kernel regression on small values of \( n \) with known labels \( \chi_{\mathrm{prime}}(n) \in \{0,1\} \),
  \item Use \( P(n) \) or its smoothed variants as target values,
  \item Benefit from generalization to unseen \( n \) and potential feature extraction (e.g., using digital or spectral features of \( n \)).
\end{itemize}

This opens the door to hybrid approaches combining analytical and data-driven perspectives.

\subsection{Software Implementation}

A prototype Python implementation of \( P(n) \) using triple integration can serve as a numerical testbed. Efficient variants may use:

\begin{itemize}
  \item Tensor-based acceleration (NumPy, PyTorch),
  \item Just-in-time compilation (Numba),
  \item Adaptive quadrature libraries (e.g., Cubature, Quadpy).
\end{itemize}

Such implementations can help visualize \( P(n) \), study its stability, and verify the convergence claims from earlier sections.

\section{Applications and Future Directions}

The smooth primality function \( P(n) \) offers a new analytical object in number theory and applied mathematics. In this section, we outline potential applications and pose open directions for future exploration.

\subsection{Potential Applications}

\subsubsection{Numerical Prime Search}

In settings where exact primality is not required, \( P(n) \) can serve as a soft classifier to identify likely primes. For example, scanning intervals with:

\[
P(n) > \tau, \quad \text{for some threshold } \tau \in (0.99, 1),
\]

can serve as a filter prior to exact verification.

\subsubsection{Continuous Prime Landscapes}

The smooth nature of \( P(n) \) allows plotting and studying the "landscape" of primes~\cite{rubinstein1994chebyshev} as a continuous curve, revealing:

\begin{itemize}
  \item local dips and peaks around prime and composite zones,
  \item regularity or clustering patterns,
  \item statistical profiles of \( P(n) \) over intervals.
\end{itemize}

This may enable new visual and geometric intuition about the distribution of primes.

\subsubsection{Analytic Number Theory Prototypes}

\( P(n) \) can be used as a testing function or proxy in analytic contexts where discontinuous \( \chi_{\mathrm{prime}}(n) \) is not suitable, such as:

\begin{itemize}
  \item spectral decomposition (e.g., Fourier or Mellin transforms),
  \item analytic continuation of discrete arithmetic sums,
  \item PDE models of arithmetic structures.
\end{itemize}

\subsubsection{Optimization and Relaxation}

In integer programming or discrete optimization problems involving primes (e.g., cryptographic parameter selection), \( P(n) \) provides a continuous relaxation that can be embedded into gradient-based solvers.

\subsubsection{Probabilistic and Statistical Modeling}

Since \( P(n) \in [0,1] \), it may be interpreted as a prior probability of primality in probabilistic models. This is particularly useful in:

\begin{itemize}
  \item Bayesian modeling involving random primes,
  \item stochastic sampling algorithms,
  \item probabilistic primality testing.
\end{itemize}

\subsubsection{Cryptographic Relevance}

In cryptographic applications:
\begin{itemize}
  \item \( P(n) \) can assist in candidate filtering for large primes,
  \item it may support heuristic tests where full deterministic primality is too expensive,
  \item or enable smooth hash-based constructions sensitive to number-theoretic properties.
\end{itemize}

\subsection{Theoretical Directions}

\subsubsection{Twin Prime and k-Tuple Extensions}

Can the function \( P(n) \cdot P(n+2) \) approximate the indicator of twin primes~\cite{ribenboim1996book}? What about higher-order combinations to detect prime k-tuples?

\subsubsection{Complex and Modular Extensions}

Is there a natural way to extend \( P(n) \) to complex or \( p \)-adic domains? For example:

\[
P(z) = \iiint_{[0,1]^3} \phi(t,u,v) \cdot K\left( \frac{z + \delta \psi(t)}{2 + (z-2)u + \delta \psi(v)} \right) \, dt \, du \, dv.
\]

\subsubsection{Riemann Zeta Analogy}

Does the behavior of \( P(n) \) reflect zeta-like structures? For instance, can we study:

\[
\sum_{n=2}^\infty \frac{P(n)}{n^s}
\]

as a smooth analog of the classical prime zeta function?

\subsection{Visualization and Educational Use}

\begin{itemize}
  \item \( P(n) \) offers visually compelling plots that may aid in teaching and intuition-building about primes.
  \item The transition behavior near composites vs. primes can be used to demonstrate analytic smoothing, approximations, and regularization.
\end{itemize}

\subsection{Open Problems}

\begin{enumerate}
  \item What is the optimal choice of kernel \( K(z) \) and density \( \phi(t,u,v) \) for fastest convergence to \( \chi_{\mathrm{prime}}(n) \)?
  \item Can \( P(n) \) be incorporated into proofs or conjectures in prime gap theory?
  \item Is there a deterministic algorithm that selects \( \delta, \varepsilon, p \) based on \( n \) to guarantee a bound \( |P(n) - \chi_{\mathrm{prime}}(n)| < \eta \)?
  \item Can one define a smooth analog of the Möbius function using similar ideas?
\end{enumerate}

\section{Conclusion}

We have introduced a smooth analytical construction \( P(n) \in [0,1] \) that approximates the characteristic function of prime numbers through a triple integral over a parameterized domain. The key idea is to use periodic kernels that detect approximate integer ratios \( x(t)/y(u,v) \), mimicking the arithmetic notion of divisibility in a continuous setting.

This framework offers several theoretical and practical advantages:
\begin{itemize}
  \item It yields a \( C^\infty \) function that separates primes from composites up to arbitrary precision by tuning parameters \( \delta \), \( \varepsilon \), and \( p \).
  \item It admits both a constructive finite version and a limiting asymptotic version, unifying smooth approximation and exact behavior.
  \item It enables new kinds of analysis — including spectral, statistical, and geometric — on the landscape of primes.
  \item It opens the door to applications in optimization, machine learning, numerical analysis, and cryptography.
\end{itemize}

The presented construction does not aim to replace classical number-theoretic tools but rather to complement them with an analytical, smooth, and versatile model. We believe that such formulations can lead to novel insights, particularly in contexts where continuity, differentiability, or soft approximations are essential.

This work also raises a number of interesting questions — theoretical, computational, and philosophical — about how arithmetic structure can be modeled through continuous processes. The integration of ideas from analysis, geometry, probability, and computer science into the study of primes remains a promising direction for future exploration.


\clearpage
\appendix
\section{Appendix: Numerical Evaluation of the Summed Triple-Integral Form}

In this appendix, we provide a basic implementation of the smooth primality function \( P(n) \) using numerical integration, along with tabulated values for small prime and composite numbers.

\subsection{Python Code for the Summed Triple Integral Version}

The following Python code implements the summed version of the smooth primality function \( P(n) \), where for each \( m \in \{2, \dots, n-1\} \), a triple integral over \( [0,1]^3 \) is computed and then averaged. This version closely follows the theoretical definition introduced in Section~2.

The function uses \texttt{scipy.integrate.tplquad} to numerically approximate the integral. The key components are:
\begin{itemize}
  \item \texttt{psi(s)}: the smooth bump function \( \psi(t) = \sin^2(\pi t) \),
  \item \texttt{K(z, eps, p)}: the periodic suppression kernel,
  \item \texttt{smooth\_integral(n, m)}: computes the triple integral for a given \( n \) and \( m \),
  \item \texttt{P(n)}: computes the average of integrals over all \( m < n \).
\end{itemize}

The fixed parameters used are \( \delta = 0.05 \), \( \varepsilon = 10^{-5} \), and \( p = 8 \), which provide good contrast between prime and composite values in practice.

\begin{verbatim}
import numpy as np
from scipy.integrate import tplquad

# Smooth function for "swelling" the intervals
def psi(s):
    return np.sin(np.pi * s) ** 2

# Smooth kernel for suppression near integers
def K(z, eps=1e-5, p=8):
    s2 = np.sin(np.pi * z)**2
    return (s2 / (s2 + eps)) ** p

# Integral over [0,1]^3 for given n and m
def smooth_integral(n, m, delta=0.05, eps=1e-5, p=8):
    def integrand(t, u, v):
        x = n + delta * psi(t)
        y = m + delta * psi(v)
        return K(x / y, eps, p)
    result, _ = tplquad(
        integrand,
        0, 1,                           # t in [0,1]
        lambda u: 0, lambda u: 1,       # u in [0,1]
        lambda u, v: 0, lambda u, v: 1  # v in [0,1]
    )
    return result

# Main function P(n)
def P(n, delta=0.05, eps=1e-5, p=8):
    if n <= 2:
        return 1.0  # by definition
    values = []
    for m in range(2, n):
        I = smooth_integral(n, m, delta=delta, eps=eps, p=p)
        values.append(I)
    return sum(values) / len(values)
\end{verbatim}

\textbf{Note:} This version is computationally intensive due to repeated triple integrations. However, it provides the most faithful numerical implementation of the theoretical model, and reliably separates primes from composites when tuned properly.

\subsection{Table of Sample Values}

The table below shows rounded values of \( P(n) \). As expected, the values for prime numbers are close to 1, while composite numbers yield significantly lower outputs.

\begin{center}
\begin{tabular}{|c|c|c|}
\hline
\( n \) & Classification & \( P(n) \) \\
\hline
2  & Prime     & 1.000 \\
3  & Prime     & 1.000 \\
4  & Composite & 0.919 \\
5  & Prime     & 1.000 \\
6  & Composite & 0.913 \\
7  & Prime     & 1.000 \\
8  & Composite & 0.936 \\
9  & Composite & 0.975 \\
10 & Composite & 0.947 \\
11 & Prime     & 1.000 \\
12 & Composite & 0.917 \\
13 & Prime     & 1.000 \\
\hline
\end{tabular}
\end{center}

These values confirm the theoretical behavior established by the convergence theorems: the function \( P(n) \) closely approximates the prime characteristic function for small primes and reliably suppresses values for composites.

\subsection{Python Code for the Reduced One-Dimensional Version}

To reduce computational cost, we consider the simplified version of \( P(n) \) based on synchronized paths. This version replaces the triple integral with a one-dimensional integral over \( t \in [0,1] \), while summing over all potential divisors \( m < n \).

The formula is:
\[
P(n) = \frac{1}{n - 2} \sum_{m = 2}^{n - 1}
\int_0^1 K\left( \frac{n + \delta \psi(t)}{m + \delta \psi(t)} \right) dt.
\]

The Python implementation is:

\begin{verbatim}
import numpy as np
from scipy.integrate import quad

def psi(t):
    return np.sin(np.pi * t) ** 2

def K(z, eps=1e-5, p=8):
    s2 = np.sin(np.pi * z)**2
    return (s2 / (s2 + eps)) ** p

def reduced_integral(n, m, delta=0.05, eps=1e-5, p=8):
    def integrand(t):
        x = n + delta * psi(t)
        y = m + delta * psi(t)
        return K(x / y, eps, p)
    result, _ = quad(integrand, 0, 1)
    return result

def P_reduced(n, delta=0.05, eps=1e-5, p=8):
    if n <= 2:
        return 1.0
    values = [reduced_integral(n, m, delta, eps, p) for m in range(2, n)]
    return sum(values) / len(values)
\end{verbatim}

\textbf{Note:} This version is significantly faster than the triple integral and still demonstrates strong separation between primes and composites in practice.

\subsection{Comparison Table: One-Dimensional vs Triple Integral}

Below is a comparative table of \( P(n) \) values computed using both versions with the same parameters.

\begin{center}
\begin{tabular}{|c|c|c|c|}
\hline
\( n \) & Classification & Triple Integral & Reduced (1D) \\
\hline
2  & Prime     & 1.000 & 1.000 \\
3  & Prime     & 1.000 & 1.000 \\
4  & Composite & 0.919 & 0.863 \\
5  & Prime     & 1.000 & 1.000 \\
6  & Composite & 0.913 & 0.867 \\
7  & Prime     & 1.000 & 1.000 \\
8  & Composite & 0.936 & 0.907 \\
9  & Composite & 0.975 & 0.966 \\
10 & Composite & 0.947 & 0.926 \\
11 & Prime     & 1.000 & 1.000 \\
12 & Composite & 0.917 & 0.891 \\
13 & Prime     & 1.000 & 1.000 \\
\hline
\end{tabular}
\end{center}

Both versions demonstrate consistent suppression of composite inputs, though the one-dimensional version yields slightly lower values for primes due to reduced smoothing freedom.

\end{document}